\newsavebox{\measure@tikzpicture}
  \def\tikz@width{#1}%
  \def\tikzscale{1}\begin{lrbox}{\measure@tikzpicture}%
  \edef\tikzscale{\pgfmathresult}%
\numberwithin{equation}{section}
\theoremstyle{plain}
\newtheorem{theorem}{Theorem}[section]
\newtheorem{lemma}[theorem]{Lemma}
\theoremstyle{definition}
\newtheorem{definition}[theorem]{Definition}
\newtheorem{example}[theorem]{Example}
\newtheorem{claim}{Claim}
\renewcommand{\phi}{\varphi}
\renewcommand{\epsilon}{\varepsilon}
\newcommand{\mb}[1]{\mathbf{#1}}
\renewcommand{\vec}[1]{\mathbf{#1}}
\newcommand{\mc}[1]{\mathcal{#1}}
\newcommand{\A}{\mb{A}}
\newcommand{\B}{\mb{B}}
\newcommand{\C}{\mb{C}}
\newcommand{\M}{\mb{M}}
\newcommand{\U}{\mb{U}}
\newcommand{\x}{\vec{x}}
\newcommand{\2}{\mb{2}}
\newcommand{\PI}{\mb{3}^\text{p}}
\newcommand{\TC}{\mb{3}^\text{c}}
\newcommand{\bdot}{\boldsymbol{\cdot}}
\newcommand{\meet}{\wedge}
\newcommand{\join}{\vee}
\DeclareMathOperator{\ds}{ds}
\DeclareMathOperator{\cd}{cd}
\title{Commuting degrees for BCK-algebras}
\title{Commuting degrees for BCK-algebras}
\author{C. Matthew Evans}
\date{}
\begin{document}

\begin{abstract}
We discuss the following question: given a finite BCK-algebra, what is the probability that two randomly selected elements commute? We call this probability the \textit{commuting degree} of a BCK-algebra. In a previous paper, the author gave sharp upper and lower bounds for the commuting degree of a BCK-algebra with order $n$. We expand on those results in this paper: we show that, for each $n\geq 3$, there is a BCK-algebra of order $n$ realizing each possible commuting degree and that the minimum commuting degree is achieved by a unique BCK-algebra of order $n$ Additionally, we show that every rational number in $(0,1]$ is the commuting degree of some finite BCK-algebra.
\end{abstract}

\maketitle

\section{Introduction}

In 1973, Gustafson proved that the probability of two elements commuting in a finite non-Abelian group $G$ is at most $\frac{5}{8}$ \cite{gustafson73}. There are several ways to generalize this idea, but many such generalizations fall under the umbrella of the following question: what is the probability that a uniformly randomly selected $k$-tuple over a finite algebraic structure satisfies some specified first-order formula in $k$ free variables? We make this precise as follows:

\begin{definition}\label{defn:ds} Given a first-order language $\mathcal{L}$, a finite $\mathcal{L}$-structure $M$, and an $\mathcal{L}$-formula $\phi(x_1, x_2, \ldots x_k)$ in $k$ free variables, the quantity
\[\ds(\phi, M)=\frac{|\,\{\,(a_1, a_2,\ldots, a_k)\in M^k\mid \phi(a_1, a_2, \ldots a_k)\,\}|}{|M|^k}\] is the \emph{degree of satisfiability} of the formula $\phi$ in the structure $M$.
\end{definition}

\begin{definition} Let $T$ be a theory over a first-order language $\mathcal{L}$ and $\phi$ an $\mathcal{L}$-formula in $k$ free variables. We say that $\phi$ has \emph{finite satisfiability gap} $\epsilon$ in $T$ if there is a constant $\epsilon >0$ such that, for every finite model $M$ of $T$, either $\ds(\phi, M)=1$ or $\ds(\phi, M)\leq 1-\epsilon$\,.
\end{definition}

Using Gustasfon's result as an example, in the language of groups, the commutativity equation $xy=yx$ has finite satisfiability gap $\frac{3}{8}$: every finite group either has degree of satisfiability 1 (if it is Abelian) or no larger than $\frac{5}{8}$.

There is a substantial literature about commuting probabilities for finite groups; rather than list a lengthy list of citations here, we refer the reader to two specific papers and the bibliographies contained therein. The first is a 2006 paper by Guralnick and Robinson \cite{GR06} which appears to capture the zeitgeist of commuting probabilities in groups up to the early 2000's, and the second is the survey paper \cite{DNP13} by Das, Nath, and Pournaki from 2013.

Probabilities for some commutator-like equations in groups were considered by Delizia, Jezernik, Moravec, and Nicotera in \cite{DJM20} as well as by Lescot \cite{lescot95}, while Kocsis \cite{kocsis20} studied probabilities for various other equations generalizing the commutativity equation. MacHale investigated commuting probability for finite rings in \cite{machale76}, and there is more recent work focused on finite rings by Buckley and MacHale \cite{BM13, BM17}, Buckley, MacHale, and N\'{i} Sh\'{e} \cite{BMS13}, and Basnet, Dutta, and Nath \cite{BDN17}. Other probability questions in finite rings have also been explored; for example, the probability that two randomly chosen elements multiply to 0 has been examined in \cite{dolzan22, EJ18, SV24}, while the probability that two randomly chosen elements multiply to some arbitrary element $x$ has been examined in \cite{RS23, RBH19} and a very recent pre-print by Dol\v{z}an \cite{dolzan25}.

A recent paper of Bumpus and Kocsis \cite{BK24} considers probability questions for the class of Heyting algebras, which are the algebraic semantics for intuitionistic logic. They showed that, among all one-variable equations in the language of Heyting algebras, only three have a finite satisfiability gap: $x=\top$, $\neg x=\top$, and $x\join\neg x=\top$. Inspired by their work, the present author investigated satisfiability gaps for some equations in the language of BCK-algebras, a class of algebraic structures introduced by Imai and Is\'{e}ki \cite{II66} as the algebraic semantics for a non-classical logic having only implication. In \cite{evans23}, the author gave sharp bounds for the probability that two elements in a finite non-commutative BCK-algebra commute -- what we call the \emph{commuting degree} of the algebra -- by finding algebras that realized those bounds. As a consequence, the commutativity equation for BCK-algebras does not have a finite satisfiability gap.

Empirically, it was observed that for BCK-algebras of orders $n=3$, $4$, and $5$, every possible commuting degree is obtained by some algebra of that order. It was conjectured that this is true for all $n\geq 3$, and further, that every rational in $(0,1]$ occurs as the commuting degree of some BCK-algebra. This conjecture indicates that BCK-algebras are somewhat like semigroups: Ponomarenko and Selinski proved in \cite{PS12} that every rational in $(0,1]$ is the commuting probability for some semigroup.

The aim of this paper is to prove the two conjectures above. Additionally, we show that for each $n$, there is exactly one algebra (up to isomorphism) that achieves the minimum commuting degree. In the next section, we define the class of BCK-algebras and discuss two techniques for constructing new BCK-algebras. In section 3, we discuss the relevant material from \cite{evans23} and prove the main results.

\section{Preliminaries}

\begin{definition}A \emph{BCK-algebra} is an algebra $\A=\langle A; \bdot, 0\rangle$ of type $(2,0)$ such that 
\begin{enumerate}
\item[(BCK1)] $\bigl[(x\bdot y)\bdot(x\bdot z)\bigr]\bdot(z\bdot y)=0$
\item[(BCK2)] $\bigl[x\bdot (x\bdot y)\bigr]\bdot y=0$
\item[(BCK3)] $x\bdot x=0$
\item[(BCK4)] $0\bdot x=0$
\item[(BCK5)] $x\bdot y=0$ and $y\bdot x=0$ imply $x=y$.
\end{enumerate} for all $x,y,z\in A$.
\end{definition}

These algebras are partially ordered by: $x\leq y$ if and only if $x\bdot y=0$. Note that 0 is the least element by (BCK4). One can also show that $x\bdot 0=x$ for all $x\in \A$. We say the algebra $\A$ is \emph{bounded} if there exists an element $\top$ such that $x\bdot\top =0$ for all $x\in \A$. By (BCK5), if $\A$ contains incomparable elements $a$ and $b$, which we indicate with the notation $a\perp b$, then we must have either $a\bdot b\neq 0$ or $b\bdot a\neq 0$.

Define a term operation $\meet$ by $x\meet y:=y\bdot(y\bdot x)$. The element $x\meet y$ is a lower bound for $x$ and $y$, but in general is not the greatest lower bound of $x$ and $y$. One can show that if $x\meet y=y\meet x$, then $x\meet y$ is the greatest lower bound, and in this case we say that $x$ and $y$ \emph{commute}. If this equation holds for all $x,y\in\A$, we say the algebra $\A$ is \emph{commutative}. The class of all commutative BCK-algebras is an important and well-studied subclass of all BCK-algebras. While the class of all BCK-algebras is not a variety, the class of commutative BCK-algebras do form a variety (that is, this class is closed under taking subalgebras, products, and homomorphic images); for more detail on the elementary properties of BCK-algebras we refer the reader to \cite{it76, it78, mj94}.

Here we give two methods for constructing new BCK-algebras. Let $\{\A_\lambda\}_{\lambda\in\Lambda}$ be a family of BCK-algebras such that $A_\lambda\cap A_\mu=\{0\}$ for $\lambda\neq \mu$, and let $U$ denote the union of the $A_\lambda$'s. Equip $U$ with the operation 
\[x\bdot y=\begin{cases}x\bdot_\lambda y &\text{if $x,y\in A_\lambda$}\\x&\text{otherwise}\end{cases}\,.\] Then $U$ is a BCK-algebra we will call the \emph{BCK-union} of the $\A_\lambda$'s and denote it by $\U=\bigsqcup_{\lambda\in\Lambda}\A_\lambda$. This construction first appears in \cite{it76}, though only in the case $|\Lambda|=2$. For a full proof that this is a BCK-algebra, see \cite{evans20}.

Next, given any BCK-algebra $\A$ of order $n-1$, we construct a new BCK-algebra of order $n$ by appending a new top element, call it $\top$, and extending the BCK-operation as follows:
\begin{align*}
x\bdot \top&=0\\
\top\bdot \top&=0\\
\top\bdot x&=\top
\end{align*} for all $x\in\A$. This is known as \emph{Is\'{e}ki's extension of $\A$}, which we will denote $\A\oplus\top$. Is\'{e}ki's extension always yields a bounded non-commutative BCK-algebra having $\A$ as a maximal subalgebra \cite{iseki75}. We also note an important fact here: every BCK-algebra of order $n$ contains a subalgebra of order $n-1$ \cite{hao86}.

We call attention to three BCK-algebras of small order that will be used in conjunction with the constructions above. We give their Cayley tables in Table \ref{tab:algs}.
\begin{table}[h]
{\centering
\begin{tabular}{c||cc}
$\bdot$  & 0              & 1 \\\hline\hline
0             & 0              & 0 \\
1             & 1              & 0
\end{tabular}
\hspace{1cm}
\begin{tabular}{c||ccc}
$\bdot$  & 0              & $1$ & $2$ \\\hline\hline
0             & 0              & 0 & 0 \\
$1$             & $1$              & 0 & 0 \\
$2$             & $2$              & $2$ & 0 
\end{tabular}
\hspace{1cm}
\begin{tabular}{c||ccc}
$\bdot$  & 0              & $1$ & $2$ \\\hline\hline
0             & 0              & 0 & 0 \\
$1$             & $1$              & 0 & 0 \\
$2$             & $2$              & $1$ & 0 
\end{tabular}
\caption{\label{tab:algs} The algebras $\2$, $\PI$, and $\TC$}
}
\end{table} The algebra $\2$ is the unique (up to isomorphism) BCK-algebra of order 2, and it is commutative. The algebra $\PI$ is not commutative, but it is \emph{positive implicative}, meaning it satisfies the equation $x\bdot y=(x\bdot y)\bdot y$ for all $x,y\in\PI$. The algebra $\TC$ is commutative but not positive implicative. The posets for all three algebras above are chains, and we note that $\PI\cong \2\oplus\top$.

\section{Commuting degrees}\label{sec:cd}

Given a BCK-algebra $\A$, let $C(\A)$ denote the set of commuting pairs of elements:\[C(\A)=\{\, (x,y)\in A^2\,\mid\, x\meet y= y\meet x\,\}\,.\] Following definition \ref{defn:ds}, define the \emph{commuting degree} of $\A$, denoted $\cd(\A)$, to be the degree of satisfiability of the commutativity equation $x\meet y=y\meet x$; that is, \[\cd(\A)=\frac{|C(\A)|}{|\A|^2}\,.\] This is the probability that two elements chosen uniformly randomly (with replacement) commute with one another.

In any BCK-algebra $\A$, we have $x\meet 0=0\meet x$ for all $x\in \A$, and every element commutes with itself by (BCK3). So every pair of the form $(x,0)$, $(0,x)$, and $(x,x)$ is in $C(\A)$. Thus, if $|\A|=n$, we have $|C(\A)| \geq 3n-2$ and \[\cd(\A)\geq \frac{3n-2}{n^2}\,.\] Of course, if $\A$ is commutative then $\cd(\A)=1$, but if $\A$ is non-commutative we have
\[\frac{3n-2}{n^2}\leq \cd(\A)\leq \frac{n^2-2}{n^2}\] and these bounds are sharp.

Let $\U=\bigsqcup_{\lambda\in\Lambda}\A_\lambda$ be a BCK-union of a family of BCK-algebras. If two elements $a$ and $b$ lie in distinct components $A_\lambda$ and $A_\mu$ ($\lambda\neq \mu)$, then $a$ and $b$ necessarily commute:
\begin{align*}
a\meet b &= b\bdot (b\bdot a)=b\bdot b=0\\
b\meet a &= a\bdot (a\bdot b)=a\bdot a=0\,.
\end{align*} Heuristically, then, we expect that the commuting degree is larger when an algebra has many incomparable elements, and smaller when an algebra has many comparable elements. The following four results can be found in \cite{evans23}, though we present sketches for Theorems \ref{max cd} and \ref{min cd}.


\begin{lemma}\label{union with 2}
Let $\A$ be a BCK-algebra with $|\A|=n$ and $\cd(\A)=\frac{k}{n^2}$. Then 
\[\cd(\A\sqcup\2)=\frac{k+2n+1}{(n+1)^2}\,.\]
\end{lemma}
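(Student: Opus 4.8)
The plan is to count $|C(\A\sqcup\2)|$ directly and then divide by $(n+1)^2$. First I would record the basic data: since $\A$ and $\2$ share only the element $0$, the union $\U=\A\sqcup\2$ has $n+1$ elements; writing $\2=\{0,1\}$, the element $1$ is the single new point, and $1\notin A$. So $\cd(\A\sqcup\2)=|C(\U)|/(n+1)^2$, and the task reduces to determining $|C(\U)|$.

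Next I would partition the $(n+1)^2$ ordered pairs over $\U$ according to whether the new element $1$ appears. The pairs lying entirely in $A\times A$ are handled by observing that the BCK-union operation restricted to the single component $\A$ agrees with $\bdot$ on $\A$, so $x\meet y$ computed in $\U$ equals $x\meet y$ computed in $\A$ for all $x,y\in A$. Hence a pair in $A\times A$ commutes in $\U$ exactly when it commutes in $\A$, contributing precisely $|C(\A)|=k$ commuting pairs.

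The heart of the argument is to show that the new element $1$ commutes with every element of $\U$, which accounts for the remaining $2n+1$ pairs: the pair $(1,1)$ together with the $2n$ pairs $(1,x)$ and $(x,1)$ for $x\in A$. The pair $(1,1)$ commutes by (BCK3). For $x=0$, the pairs $(1,0)$ and $(0,1)$ commute because $0$ commutes with every element in any BCK-algebra. For $x\in A\setminus\{0\}$, the elements $1$ and $x$ lie in distinct components of the union, so they commute by the cross-component computation already recorded in the excerpt ($a\meet b=b\meet a=0$ when $a,b$ are in different components). Thus all $2n+1$ pairs meeting $1$ lie in $C(\U)$.

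Summing the two contributions gives $|C(\U)|=k+(2n+1)$, and dividing by $(n+1)^2$ yields the claimed formula. I do not anticipate a serious obstacle; the only point requiring care is the shared bottom element $0$, which must be treated separately from the cross-component fact since $0$ belongs to both components. That case is immediate, however, because $0$ commutes with everything, so the bookkeeping closes cleanly.
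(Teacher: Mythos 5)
Your proof is correct and uses exactly the ingredients the paper supplies: the cross-component computation $a\meet b=b\meet a=0$ displayed just before the lemma, plus the standard facts that $0$ and $x$ always commute and that the union operation restricts to $\bdot_\A$ on $A\times A$. The paper itself defers the proof to \cite{evans23}, but your count of $k$ old pairs plus the $2n+1$ pairs involving the new element is the intended argument, and your care with the shared element $0$ is exactly the right bookkeeping.
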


\begin{theorem}\label{max cd}
For each $n\geq 3$, there is a non-commutative BCK-algebra of order $n$ realizing the maximum commuting degree $\frac{n^2-2}{n^2}$.
\end{theorem}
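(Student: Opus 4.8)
The plan is to realize the maximum by building an algebra in which exactly one pair of distinct nonzero elements fails to commute. The key structural observation is that the non-commuting relation is symmetric: if $a\meet b\neq b\meet a$, then certainly $b\meet a\neq a\meet b$, so non-commuting ordered pairs always occur in twos. Since every pair involving $0$ and every diagonal pair $(x,x)$ lies in $C(\A)$, a non-commutative algebra must omit at least two ordered pairs from $C(\A)$, giving $\cd(\A)\leq\frac{n^2-2}{n^2}$; the task is thus to construct an order-$n$ algebra omitting exactly two.

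First I would dispatch the base case $n=3$ using the algebra $\PI$ from Table \ref{tab:algs}. A direct computation with $x\meet y=y\bdot(y\bdot x)$ shows that $1\meet 2=2\bdot(2\bdot 1)=2\bdot 2=0$ while $2\meet 1=1\bdot(1\bdot 2)=1\bdot 0=1$, so $\{1,2\}$ is non-commuting; every pair involving $0$ and every diagonal pair commutes, and these exhaust the remaining pairs. Hence $\cd(\PI)=\frac{7}{9}=\frac{3^2-2}{3^2}$, as required.

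For the inductive step I would pass from order $n$ to order $n+1$ by taking the BCK-union with $\2$ and applying Lemma \ref{union with 2}. Concretely, if $\A$ is a non-commutative algebra of order $n$ with $\cd(\A)=\frac{n^2-2}{n^2}$, then writing $k=n^2-2$ gives
\[
\cd(\A\sqcup\2)=\frac{k+2n+1}{(n+1)^2}=\frac{(n^2-2)+2n+1}{(n+1)^2}=\frac{(n+1)^2-2}{(n+1)^2}\,,
\]
which is exactly the maximum for order $n+1$. Starting from $\PI$ and iterating, the algebra $\PI\sqcup\2\sqcup\cdots\sqcup\2$ (with $n-3$ copies of $\2$) has order $n$ and realizes the maximum commuting degree for every $n\geq 3$.

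There is no genuinely hard step here: the lemma already encodes the arithmetic, and the only thing to confirm is that the union construction introduces no new non-commuting pairs. This is immediate from the fact, recorded just before the statement, that elements lying in distinct components of a BCK-union automatically commute, so the single non-commuting pair $\{1,2\}$ inside the $\PI$-component is never disturbed.
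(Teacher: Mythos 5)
Your proposal is correct and follows essentially the same route as the paper: the base case is $\PI$ with commuting degree $\tfrac{7}{9}$, and the inductive step iterates $-\sqcup\2$ using Lemma \ref{union with 2}, exactly as in the paper's family $\B_3=\PI$, $\B_n=\B_{n-1}\sqcup\2$. The explicit verification that $\{1,2\}$ is the unique non-commuting pair in $\PI$ and the remark on why the union introduces no new non-commuting pairs are welcome details, but the argument is the same.
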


\begin{proof}[Sketch of proof.]
For $n=3$, the algebra $\PI$ defined in Table \ref{tab:algs} has commuting degree $\frac{7}{9}$. Then we define a family of algebras $\B_n$ by
\begin{align*}
\B_3&=\PI\\ 
\B_n&=\B_{n-1}\sqcup\2 \text{ for $n>3$\,.}
\end{align*}
and by inducation one can show $\cd(\B_n)=\frac{n^2-2}{n^2}$.
\end{proof} If we label the atoms of $\B_n$ by $\{a_i\}_{i=1}^{n-2}$, then Figure \ref{fig:B_n} shows the Hasse diagram, verifying our intuition that commuting degree is large when the algebra has many incomparable elements.
\begin{figure}[h]
\centering
\begin{tikzpicture}
\filldraw (0,0) circle (2pt);
\filldraw (-2,1) circle (2pt);
\filldraw (-2,2) circle (2pt);
\filldraw (-1,1) circle (2pt);
\filldraw (0,1) circle (2pt);
\filldraw (2,1) circle (2pt);
\draw [-] (0,0) -- (-2,1) -- (-2,2);
\draw [-] (0,0) -- (-1,1);
\draw [-] (0,0) -- (0,1);
\draw [-] (0,0) -- (2,1);
	\node at (0,-.4) {\small 0};
	\node at (-2.4, 1) {\small $a_1$};
	\node at (-1, 1.3) {\small $a_2$};
	\node at (0, 1.3) {\small $a_3$};
\filldraw (.6,1) circle (.7pt);
\filldraw (1,1) circle (.7pt);
\filldraw (1.4,1) circle (.7pt);
	\node at (2, 1.3) {\small $a_{n-2}$};
\end{tikzpicture}
\caption{Hasse diagram for $\B_n$}
\label{fig:B_n}
\end{figure}
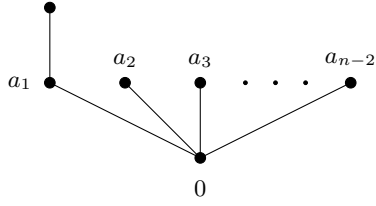

We note that, aside from $n=3$, there are several algebras of order $n$ that achieve this maximum commuting degree. For example, for $n=4$, there are three distinct algebras with commuting degree $\tfrac{14}{16}$, while for $n=5$ there are eight distinct algebras with commuting degree $\tfrac{23}{25}$.

We next consider the behavior of commuting degree with respective to Is\'{e}ki's extension.

\begin{lemma}\label{cd of sum}
Suppose $\A$ is a BCK-algebra with $|\A|=n$ and $\cd(\A)=\frac{k}{n^2}$. Then 
\[\cd(\A\oplus\top)=\frac{k+3}{(n+1)^2}\,.\]
\end{lemma}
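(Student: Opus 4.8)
The plan is to count the commuting pairs of $\A\oplus\top$ directly by partitioning the $(n+1)^2$ ordered pairs according to whether each coordinate lies in $\A$ or equals the new top element $\top$. Since $|\A\oplus\top|=n+1$, it suffices to show $|C(\A\oplus\top)|=k+3$.

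First I would verify that the meet operation is unaffected on $\A$. For $x,y\in\A$, the product $y\bdot x$ again lies in $\A$ (the extended operation restricted to $\A$ agrees with the original), so $x\meet y=y\bdot(y\bdot x)$ is computed entirely inside $\A$ and equals the meet there. Hence a pair $(x,y)$ with both coordinates in $\A$ commutes in $\A\oplus\top$ exactly when it commutes in $\A$, contributing precisely $k$ pairs.

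Next I would handle the pairs involving $\top$ using the defining relations $x\bdot\top=0$, $\top\bdot x=\top$, and $\top\bdot\top=0$, together with the identity $z\bdot 0=z$. A short computation gives, for each $x\in\A$,
\[
\top\meet x=x\bdot(x\bdot\top)=x\bdot 0=x,\qquad x\meet\top=\top\bdot(\top\bdot x)=\top\bdot\top=0.
\]
Thus $(\top,x)$ (equivalently $(x,\top)$) commutes if and only if $x=0$, so among the $2n$ mixed pairs only $(\top,0)$ and $(0,\top)$ belong to $C(\A\oplus\top)$, contributing $2$. Finally $\top\meet\top=\top\bdot(\top\bdot\top)=\top\bdot 0=\top$, so $\top$ commutes with itself, contributing $1$. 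Summing yields $k+2+1=k+3$, and dividing by $(n+1)^2$ gives the claim.

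The argument is essentially mechanical, so I do not anticipate a genuine obstacle; the only points requiring care are confirming that the meet within $\A$ is truly unchanged (which holds because $\top$ never arises as a product of two elements of $\A$) and correctly recording the asymmetry $\top\meet x=x\neq 0=x\meet\top$ for $x\neq 0$, which is exactly what blocks any nontrivial commuting with the top element.
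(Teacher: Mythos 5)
Your proof is correct and takes essentially the same approach as the paper, which only sketches the argument: the paper likewise observes that $\top$ commutes only with $0$ and itself, so that exactly the three pairs $(0,\top)$, $(\top,0)$, $(\top,\top)$ are added to the $k$ pairs inherited from $\A$. Your explicit computations $\top\meet x=x$ and $x\meet\top=0$ simply fill in the details the paper omits.
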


The idea of the proof is simple: the new element $\top$ does not commute with any non-zero element of $\A$, but it does commute with 0 and itself, meaning \[(0,\top), (\top, 0), (\top, \top)\in C(\A\oplus\top)\] and therefore $|C(\A\oplus\top)|=k+3$.

\begin{theorem}\label{min cd}
For each $n\geq 3$, there is a BCK-algebra of order $n$ realizing the minimum commuting degree $\frac{3n-2}{n^2}$.
\end{theorem}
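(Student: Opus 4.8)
The plan is to use Is\'{e}ki's extension as the engine of an induction on $n$, in direct analogy with the use of the BCK-union in Theorem \ref{max cd}. The decisive observation is that Is\'{e}ki's extension sends a minimum-commuting-degree algebra of order $n$ to one of order $n+1$. Indeed, if $\cd(\A)=\frac{3n-2}{n^2}$, then Lemma \ref{cd of sum} (applied with $k=3n-2$) gives
\[\cd(\A\oplus\top)=\frac{(3n-2)+3}{(n+1)^2}=\frac{3(n+1)-2}{(n+1)^2},\]
which is precisely the minimum value for order $n+1$. Thus it suffices to produce a single base algebra of order $3$ attaining the minimum and then iterate.

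For the base case I would take $\A=\PI$. When $n=3$ the two bounds coincide, since $3n-2=n^2-2=7$, so the minimum commuting degree $\frac{7}{9}$ equals the maximum; and $\cd(\PI)=\frac{7}{9}$ was already recorded in the proof of Theorem \ref{max cd}. I then define a family by
\[\mc{C}_3=\PI,\qquad \mc{C}_n=\mc{C}_{n-1}\oplus\top\ \ (n>3),\]
and prove $\cd(\mc{C}_n)=\frac{3n-2}{n^2}$ by induction. The inductive step is a single invocation of Lemma \ref{cd of sum}: if $\cd(\mc{C}_{n-1})=\frac{3(n-1)-2}{(n-1)^2}$, then the displayed identity above yields $\cd(\mc{C}_n)=\frac{3n-2}{n^2}$.

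I expect no real obstacle here; all of the work is in selecting the construction. The conceptual point, worth stating explicitly, is that Is\'{e}ki's extension is exactly the right tool because adjoining a top element creates as many new comparable (hence potentially non-commuting) pairs as possible, in keeping with the heuristic that commuting degree is driven down by comparability. In fact each $\mc{C}_n$ is an $n$-element chain, and the equality $|C(\mc{C}_n)|=3n-2$ says that the only commuting pairs are the forced ones of the form $(x,0)$, $(0,x)$, and $(x,x)$. The sole item needing care is bookkeeping at the bottom of the induction: the recursion must be anchored at $n=3$ rather than at $n=2$, where every BCK-algebra is commutative, and one should confirm that $\PI$ genuinely realizes $\frac{7}{9}$ before iterating.
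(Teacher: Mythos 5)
Your proposal is correct and is essentially identical to the paper's own argument: the paper defines $\M_3=\PI$ and $\M_n=\M_{n-1}\oplus\top$ and verifies $\cd(\M_n)=\frac{3n-2}{n^2}$ by the same application of Lemma \ref{cd of sum}. Your added remarks about the chain structure and the forced commuting pairs match the paper's discussion following the theorem.
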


\begin{proof}[Sketch of proof.]
For $n=3$, the algebra $\PI$ has commuting degree $\frac{7}{9}$. Define a new family of BCK-algebras as follows:
\begin{align*}
\M_3 &= \PI \text{ and}\\ 
\M_n&=\M_{n-1}\oplus\top\text{ for $n>3$\,.}
\end{align*} One can show that $\cd(\M_n)=\frac{3n-2}{n^2}$\,.
\end{proof} Since the $\M_n$'s are linear, the above result verifies our intuition that commuting degree is small when there are many comparable elements. 

Unlike for the maximum commuting degree, it turns out for each $n$ there is a unique algebra achieving the minimum commuting degree, that being the algebra $\M_n$ defined above. (In fact, if we extend our definition of $\M_n$ to include $\M_2=\2$, then this is also true for $n=2$ since there is one possible value for commuting degree, 1, and only one BCK-algebra of order 2 which is necessarily commutative.)

\begin{theorem}\label{unique minimum}
For each $n\geq 3$, the algebra $\M_n$ is the unique BCK-algebra of order $n$ with commuting degree $\frac{3n-2}{n^2}$.
\end{theorem}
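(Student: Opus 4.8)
The plan is to recast the minimality condition concretely and then argue by strong induction on $n$. Since the pairs $(x,0)$, $(0,x)$, and $(x,x)$ always commute and contribute exactly $3n-2$ to $|C(\A)|$, the equality $\cd(\A)=\frac{3n-2}{n^2}$ holds precisely when \emph{no} two distinct nonzero elements commute; call this property $(\star)$. I would then prove the sharper statement that a BCK-algebra of order $n$ satisfies $(\star)$ if and only if it is isomorphic to $\M_n$. The ``if'' direction is the computation already underlying Theorem \ref{min cd}. For the base case $n=3$ I would enumerate the three BCK-algebras of order $3$ — the chains $\TC$ and $\PI$ and the union $\2\sqcup\2$ — and observe that only $\PI=\M_3$ is non-commutative, so it alone realizes $\frac{7}{9}$.

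For the inductive step, fix $\A$ of order $n\geq 4$ with property $(\star)$ and assume the result for all smaller orders. I would first record two tools. \textbf{(i)} For any maximal element $m$, the set $\A\setminus\{m\}$ is a subalgebra: since $x\bdot y\leq x$ always, $x\bdot y=m$ forces $m\leq x$, hence $x=m$ by maximality, so no product of elements other than $m$ can equal $m$. Because $\meet$ is computed identically in a subalgebra, $\A\setminus\{m\}$ again satisfies $(\star)$, has order $n-1$, and so $\A\setminus\{m\}\cong\M_{n-1}$ by induction; in particular it is a chain with the canonical operation. \textbf{(ii)} Two distinct atoms $a,b$ must commute: each of $a\bdot b,\,b\bdot a$ is nonzero and bounded above by an atom, so $a\bdot b=a$ and $b\bdot a=b$, whence $a\meet b=b\bdot b=0=a\bdot a=b\meet a$. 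Thus $(\star)$ forces $\A$ to have a unique atom.

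Next I would show $\A$ is a chain. If not, choose incomparable $x\perp y$. Any maximal element $m\notin\{x,y\}$ would place $x,y$ in the chain $\A\setminus\{m\}$ and force them comparable; hence $x$ and $y$ are the only maximal elements and both are maximal. Then $\A\setminus\{x,y\}$ is a subalgebra ($\cong\M_{n-2}$, a canonical chain) lying below both $x$ and $y$, so $\A$ is a chain carrying two incomparable tops over a canonical base. Writing $c$ for the top of that base, monotonicity of $\bdot$ together with $(\star)$ confine $x\bdot c,\,y\bdot c,\,x\bdot y,\,y\bdot x$ to a short list of candidate values, and substituting these into (BCK1) with the triple $(x,c,y)$ produces a nonzero value — the contradiction. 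Hence $\A$ is a chain $0<c_1<\cdots<c_{n-1}$ with top $m=c_{n-1}$, and by tool (i) the base $\A\setminus\{m\}$ is the canonical $\M_{n-1}$. It then remains to show $m$ is an Is\'{e}ki top, i.e.\ $m\bdot c_j=m$ for every $j<n-1$ (the relations $c_j\bdot m=0$ and $m\bdot m=0$ being automatic). Monotonicity against the canonical base confines each $m\bdot c_j$ to at most two values; $(\star)$ discards the one that would make $(c_j,m)$ commute, and (BCK1) discards the remaining intermediate value, leaving $m\bdot c_j=m$. Thus $\A=(\A\setminus\{m\})\oplus\top\cong\M_{n-1}\oplus\top=\M_n$.

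The hard part will be converting $(\star)$ into genuine rigidity of the operation: neither monotonicity nor $(\star)$ by itself pins a product, so the real content lies in the (BCK1) case-checks, both in eliminating the two-incomparable-tops configuration and in eliminating intermediate products under the top element. In each case one must verify that \emph{every} value other than the canonical one either creates a commuting pair or violates (BCK1). I expect the most delicate point to be making the two-tops elimination run uniformly for all $n\geq 4$ — that is, reducing the argument cleanly to the single top element of the canonical base rather than tracking the whole chain.
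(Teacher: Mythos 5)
Your overall architecture is sound and runs parallel to the paper's: induct on the order, peel off one element so that what remains is forced to be $\M_{n-1}$, then use the no-commuting-pairs condition $(\star)$ together with the BCK identities to rigidify every product involving the deleted element. Your choice to delete a \emph{maximal} element is a genuinely nice variant: it makes the existence of the order-$(n-1)$ subalgebra self-contained (the paper instead cites an external result and must then allow the new element to sit anywhere in the chain, not just at the top), and your base-case enumeration ($\TC$, $\PI$, $\2\sqcup\2$) and unique-atom observation are both correct.

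However, the load-bearing step is missing, and the tools you name for it would not deliver it. Both the two-incomparable-tops elimination and the final Is\'{e}ki-top step reduce to one lemma: if $m$ lies strictly above an initial segment $0<c_1<\cdots<c_k$ carrying the canonical $\M$-operation and no $c_j$ commutes with $m$, then $m\bdot c_j=m$ for every $j$. Monotonicity does not confine $m\bdot c_j$ to two values: a priori $m\bdot c_j$ is any of $c_1,\ldots,c_k,m$ (it is nonzero and $\leq m$), and knowing $m\bdot c_{j-1}=m$ only yields $m\bdot c_j\leq m$. The actual argument (the paper's Claim 2) is an induction on $j$: one first forces $c_j\meet m=0$, since $c_j\meet m=c_i$ with $1\leq i\leq j-1$ collapses via the inductive hypothesis and the exchange law to $c_i=(m\bdot c_i)\bdot(m\bdot c_j)=\bigl(m\bdot(m\bdot c_j)\bigr)\bdot c_i=c_i\bdot c_i=0$; one then eliminates $m\bdot c_j=c_i$ separately for $i<j$, $i=j$, and $i>j$, each by a different mechanism ($(\star)$, $(\star)$ again, and $m\leq c_i$ respectively). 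Moreover, in your two-tops configuration the contradiction does not come from (BCK1) producing a nonzero value: once the lemma gives $x\bdot c_j=x$ and $y\bdot c_j=y$, any value $x\bdot y=c_i$ forces $y\meet x=x\bdot(x\bdot y)=x$, contradicting $y\meet x\leq y$; hence $x\bdot y=x$ and $y\bdot x=y$, so $x\meet y=0=y\meet x$ and the two tops \emph{commute} --- the contradiction is with $(\star)$ itself, and indeed (BCK1) is satisfied by these values. With the lemma supplied and that endgame corrected, your route closes.
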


\begin{proof}
Knowing the statement is true for $n=3$, we induct on the order $n$. Assume the statement is true for some $n>3$. That is, assume $\M_n$ is the unique algebra of order $n$ that achieves the minimum commuting degree $\frac{3n-2}{n^2}$.

Let $\A$ be an algebra of order $n+1$ and suppose $\A$ achieves the minimum commuting degree $\frac{3(n+1)-2}{(n+1)^2}$, meaning $|C(\A)|=3n+1$. We know that $\A$ is a one-element extension of a subalgebra $\B$ of order $n$; write $A=B\cup\{a\}$ with $a\notin B$. If $B\not\cong\M_n$, then $|C(\B)|> 3n-2$. But we know that $(a,0), (0,a), (a,a)\in C(\A)$, and so
\[|C(\A)|> (3n-2)+3=3n+1\,,\] a contradiction. Thus, $B\cong\M_n$, and in the rest of the proof we identify the subalgebra $\B$ with $\M_n$. Therefore, the Cayley table for $\A$ has the form shown in Table \ref{tab:alg A}, and our goal is to show $\A\cong \M_{n+1}$. We recall that $x\leq y$ if and only if $x\bdot y=0$, and that $x\meet y$ is a common lower bound for $x$ and $y$.
\begin{table}[h]
{\centering
$\begin{NiceArray}{c||ccccccc}
\bdot  & 0  & 1 & 2 & \cdots & n-2 & n-1 & a \\\hline\hline
0           &0&0&0&\cdots&0&0&0\\
1           &1&0&0&\cdots&0&0& \\
2           &2&2&0&\cdots&0&0&\\
\vdots &\vdots&\vdots&\vdots&&\vdots&\vdots&\\
n-2 &n-2&n-2&n-2&\cdots&0&0&\\
n-1 &n-1&n-1&n-1&\cdots&n-1&0&\\
a &a&&&&&&0
\CodeAfter
	\tikz \draw (3-|8) rectangle (8-|9);
	\tikz \draw (8-|3) rectangle (9-|8);
\end{NiceArray}$
\caption{\label{tab:alg A} The algebra $\A$}
}\end{table}

{\renewcommand{\qedsymbol}{$\blacksquare$}
\begin{claim}The element $a$ must be comparable to every element of $\M_n$, meaning the Hasse diagram for $\A$ must be linear.
\end{claim}

\begin{proof}
Assume for contradiction that $a\perp k+1$, and suppose $k+1$ is the smallest element of $\M_n$ incomparable to $a$. So the Hasse diagram for $\A$ looks like Figure \ref{subfig1}.
\begin{figure}[ht]
\begin{subfigure}{0.45\linewidth}
\centering
\begin{tikzpicture}
\filldraw (0,0) circle (2pt);
\filldraw (0,1) circle (2pt);
\filldraw (0,2) circle (2pt);
\filldraw (0,3) circle (2pt);
\filldraw (1,3) circle (2pt);
\filldraw (0,4) circle (2pt);
\filldraw (0,5) circle (2pt);
\draw [-] (0,0) -- (0,1);
\draw [-] (0,2) -- (0,3);
\draw [-] (0,2) -- (1,3);
\draw [-] (0,4) -- (0,5);
	\node at (0,-.4) {\small 0};
	\node at (-.35, 1) {\small $1$};
	\node at (-.35, 2) {\small $k$};
	\node at (-.55, 3) {\small $k+1$};
	\node at (1.3, 3) {\small $a$};
	\node at (-.55, 4) {\small $n-2$};
	\node at (-.55, 5) {\small $n-1$};
\filldraw (0,1.25) circle (.7pt);
\filldraw (0,1.5) circle (.7pt);
\filldraw (0,1.75) circle (.7pt);
\filldraw (0,3.25) circle (.7pt);
\filldraw (0,3.5) circle (.7pt);
\filldraw (0,3.75) circle (.7pt);
\filldraw (.75,3.25) circle (.7pt);
\filldraw (.5,3.5) circle (.7pt);
\filldraw (.25,3.75) circle (.7pt);
\end{tikzpicture}
\caption{$\A$ if $a$ is incomparable to $k+1$}
    \label{subfig1}
    \end{subfigure}\hfill
\begin{subfigure}{0.45\linewidth}
\begin{tikzpicture}
\filldraw (0,0) circle (2pt);
\filldraw (0,1) circle (2pt);
\filldraw (0,2) circle (2pt);
\filldraw (0,3) circle (2pt);
\filldraw (0,4) circle (2pt);
\filldraw (0,5) circle (2pt);
\filldraw (0,6) circle (2pt);
\draw [-] (0,0) -- (0,1);
\draw [-] (0,2) -- (0,4);
\draw [-] (0,5) -- (0,6);
	\node at (0,-.4) {\small 0};
	\node at (-.35, 1) {\small $1$};
	\node at (-.35, 2) {\small $k$};
	\node at (-.35, 3) {\small $a$};
	\node at (-.55, 4) {\small $k+1$};
	\node at (-.55, 5) {\small $n-2$};
	\node at (-.55, 6) {\small $n-1$};
\filldraw (0,1.25) circle (.7pt);
\filldraw (0,1.5) circle (.7pt);
\filldraw (0,1.75) circle (.7pt);
\filldraw (0,4.25) circle (.7pt);
\filldraw (0,4.5) circle (.7pt);
\filldraw (0,4.75) circle (.7pt);
\end{tikzpicture}
\caption{$\A$ if $a$ is comparable to all elements}
    \label{subfig2}
\end{subfigure}
\caption{Possible Hasse diagrams for $\A$}
\end{figure}
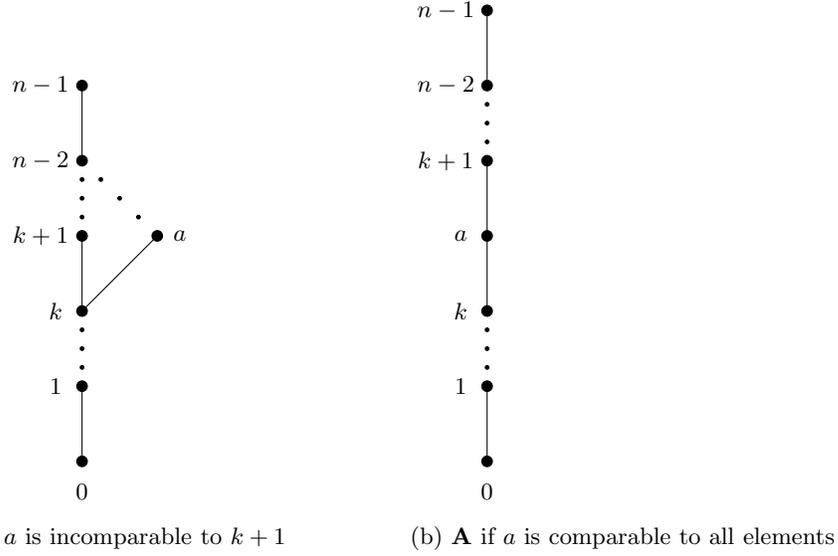

For $j\in\{0,1,\ldots, k\}$, we have $j\leq a$, meaning $j\bdot a=0$ and $a\meet j=j\bdot(j\bdot a)=j$. To avoid contradicting the minimality of $\A$ with respect to commuting degree, we need $j\meet a\in[0, j-1]$. 

{\renewcommand{\qedsymbol}{$\blacksquare$}
\begin{claim}We must have $a\bdot j=a$ for all $j\in\{0,1,\ldots, k\}$.
\end{claim}

\begin{proof}
For $j=1$, the preceding paragraph implies $1\meet a=a\bdot(a\bdot1)=0$. We know $a\bdot 1\neq 0$ since $a\not\leq 1$, but also $a\bdot 1\leq a$, and so $a\bdot 1\in\{1,2,\ldots, k,a\}$. Say $a\bdot 1=i\in\{1,2,\ldots, k\}$. Then \[a\bdot i=a\bdot(a\bdot 1)=0\] which implies $a\leq i$, a contradiction. Thus, we have $a\bdot 1=a$.

Assume we have proven $a\bdot i=a$ for all $i\in\{1,\ldots, j-1\}\subseteq[0,a]$. We will show that $a\bdot j=a$ as well. 

Suppose $j\meet a\neq 0$, say $j\meet a=i$ for some $1\leq i\leq j-1$. Then
\[i=j\meet a=a\bdot(a\bdot j)=(a\bdot i)\bdot(a\bdot j)=\bigl(a\bdot (a\bdot j)\bigr)\bdot i=i\bdot i=0\,,\] a contradiction. Thus, we must have $j\meet a=0$. 

We know $0\neq a\bdot j\leq a$, so $a\bdot j\in\{1, 2,\ldots, k,a\}$. If $a\bdot j=i$ with $1\leq i\leq j$, then
\[j\meet a=a\bdot (a\bdot j)=a\bdot i=\begin{cases}a&\text{if $i<j$}\\j&\text{if $i=j$}\end{cases}\,,\] a contradiction in either case. If $a\bdot j=\ell$ with $j+1\leq \ell\leq k$, then
\[a\bdot\ell=a\bdot(a\bdot j)=j\meet a=0\] which implies $a\leq \ell$, a contradiction. Thus, we must have $a\bdot j=a$. By induction, this means that $a\bdot j=a$ for all $j\in\{0,1,2,\ldots, k\}$.
\end{proof}}

Now, since $a\perp k+1$, we know $a\bdot (k+1)\neq 0$ and $(k+1)\bdot a\neq 0$. Since $a\bdot (k+1)\leq a$, we have $a\bdot (k+1)\in\{1,2,\ldots, k,a\}$. If $a\bdot (k+1)=i\in\{1,\ldots, k\}$, then 
\[(k+1)\meet a=a\bdot\bigl(a\bdot(k+1)\bigr)=a\bdot i=a\] by Claim 2, but this is a contradiction since $(k+1)\meet a<a$. Thus, $a\bdot (k+1)=a$ and therefore $(k+1)\meet a=0$.

Next, $a\meet (k+1)\in\{1,\ldots, k\}$ and $0\neq (k+1)\bdot a\leq k+1$. If $(k+1)\bdot a=i\in\{1,\ldots, k\}$, then 
\[a\meet (k+1)=(k+1)\bdot\bigl((k+1)\bdot a\bigr)=(k+1)\bdot i=k+1\notin\{1,\ldots, k\}\,,\] a contradiction. But also, if $(k+1)\bdot a=k+1$, then \[a\meet (k+1)=0=(k+1)\meet a\,,\] which contradicts the minimality of $\A$.

Hence, the new element $a$ must be comparable to all elements of the subalgebra $\M_n$.
\end{proof}}

Therefore the Hasse diagram for $\A$ has the form shown in Figure \ref{subfig2}. Note that for $j\in [0,k]$, we still have $a\meet j=j$ and $a\bdot j=a$ by the argument in Claim 2. Further, for any $\ell\in [k+1, n-1]$, we have $a<\ell$ which means $a\bdot\ell=0$ and $\ell\meet a=a$.

We know $a\meet \ell\in [0,a]$, but $a\meet \ell=a$ would contradict the minimality of $\A$, so in fact $a\meet \ell\in [0,k]$. Note that this also implies that $\ell\bdot a\neq a$. Since $\ell\bdot a\in [0, \ell]$, if $\ell\bdot a=i\in [0,k]\cup[k+1,\ell-1]$ then 
\[a\meet \ell=\ell\bdot i=\ell\notin [0,k]\,,\] a contradiction.  Thus, we must have $\ell\bdot a=\ell$ for all $\ell>a$. 

Putting these observations together, the updated Cayley table for $\A$ is show in Table \ref{tab:updated alg A}. The isomorphism $\A\to\M_{n+1}$ is given by
\begin{align*}
j&\mapsto j \text{ for $j\in [0,k]$}\\
a&\mapsto k+1\\
j&\mapsto j+1\text{ for $j\in [k+1, n-1]$}\,.
\end{align*}
\end{proof}

\begin{table}[h]
\centering
\begin{tabular}{c||cccccccccc}
$\bdot$  & 0 & 1 & 2 & $\cdots$ & $k$ & $a$ & $k+1$ & $\cdots$ & $n-2$ & $n-1$\\\hline\hline
0 & 0 & 0 & 0 & $\cdots$ & 0 & 0 & 0 & $\cdots$ & 0 & 0\\
1 & 1 & 0 & 0 & $\cdots$ & 0 & 0 & 0 & $\cdots$ & 0 & 0\\
2 & 2 & 2 & 0 & $\cdots$ & 0 & 0 & 0 & $\cdots$ & 0 & 0\\
$\vdots$ & $\vdots$ & $\vdots$ & $\vdots$ &  & $\vdots$ & $\vdots$ & $\vdots$ &  & $\vdots$ & $\vdots$\\
$k$ & $k$ & $k$ & $k$ & $\cdots$ & 0 & 0 & 0 & $\cdots$ & 0 & 0\\
$a$ & $a$ & $a$ & $a$ & $\cdots$ & $a$ & 0 & 0 & $\cdots$ & 0 & 0\\
$k+1$ & $k+1$ & $k+1$ & $k+1$ & $\cdots$ & $k+1$ & $k+1$ & 0 & $\cdots$ & 0 & 0\\
$\vdots$ & $\vdots$ & $\vdots$ & $\vdots$ &  & $\vdots$ & $\vdots$ & $\vdots$ &  & $\vdots$ & $\vdots$\\
$n-2$ & $n-2$ & $n-2$ & $n-2$ & $\cdots$ & $n-2$ & $n-2$ & $n-2$ & $\cdots$ & 0 & 0\\
$n-1$ & $n-1$ & $n-1$ & $n-1$ & $\cdots$ & $n-1$ & $n-1$ & $n-1$ & $\cdots$ & $n-1$ & 0\\
\end{tabular}
\caption{\label{tab:updated alg A} The updated Cayley table for $\A$}
\end{table}

Now, given a non-commutative BCK-algebra $\A$ of order $n$, the possible commuting degrees are 
\[\mc{CD}(n):=\Bigl\{\,\frac{3n-2}{n^2}\,,\, \frac{3n}{n^2}\,,\, \frac{3n+2}{n^2}\,,\, \ldots\,,\, \frac{n^2-2}{n^2}\Bigr\}\,,\] where the numerator increases by 2 since the relation $C(\A)$ is symmetric. Notice that $\mc{CD}(3)=\{\tfrac{7}{9}\}$.

An easy computation shows that \[|\mc{CD}(n)|=\frac{(n^2-2)-(3n+2)}{2}+1=\frac{(n-2)(n-1)}{2}=T_{n-2}\,,\] the $(n-2)^{\text{nd}}$ triangular number, and this means we can rewrite $\mc{CD}(n)$ as \[\Bigl\{\,\frac{n^2-2}{n^2}\,,\, \frac{n^2-4}{n^2}\,,\, \ldots\,,\, \frac{n^2-2T_{n-2}}{n^2}\Bigr\}\,.\] In \cite{evans23}, it was observed that all values in $\mc{CD}(3)$, $\mc{CD}(4)$, and $\mc{CD}(5)$ are obtained by some algebra of that order, and it was conjectured that this is true for all $n\geq 3$. 

At that time, this observation was made by simply computing the commuting degrees using the appendix in \cite{mj94}, which has Cayley tables for all BCK-algebras of orders 2, 3, 4, and 5. However, using a combination of Lemmas \ref{union with 2} and \ref{cd of sum}, we can construct algebras realizing all possible commuting degrees. Before we prove this result, we show some examples.

\begin{example}
Since $\2$ is commutative, we have $\cd(\2)=1=\frac{4}{4}$, and by Lemma \ref{cd of sum} we therefore have $\cd(\PI)=\cd(\2\oplus\top)=\frac{4+3}{3^2}=\frac{7}{9}$.

In the two tables below, we show the values of $\mc{CD}(4)$ and $\mc{CD}(5)$ as well as the algebras realizing them:
\begin{table}[h]
{\centering
\begin{tabular}{ccccc}
$\frac{10}{16}$ & & $\frac{12}{16}$  & & $\frac{14}{16}$ \\\\
$\PI\oplus \top$ & & $\TC\oplus\top$ & & $\PI\sqcup\2$\\
\end{tabular}
\caption{\label{CDs for n=4} Commuting degrees for $n=4$}
}
\end{table}

\begin{table}[h]
{\centering
\resizebox{\textwidth}{!}{\begin{tabular}{ccccccccccc}
$\frac{13}{25}$ & & $\frac{15}{25}$  & & $\frac{17}{25}$ & &
$\frac{19}{25}$ & & $\frac{21}{25}$  & & $\frac{23}{25}$\\\\
$(\PI\oplus \top)\oplus\top$ & & $(\TC\oplus\top)\oplus\top$ & & $(\PI\sqcup\2)\oplus\top$ & &
$(\PI\oplus \top)\sqcup\2$ & & $(\TC\oplus\top)\sqcup\2$ & & $(\PI\sqcup\2)\sqcup\2$\\
\end{tabular}}
\caption{\label{CDs for n=5} Commuting degrees for $n=5$}
}
\end{table}

\begin{figure}[h]
\centering
\begin{scaletikzpicturetowidth}{\textwidth}
\begin{tikzpicture}[scale=\tikzscale]
\node (A) at (0,0) {\small 7};
\node (B) at (-1, -1.5) {\small 10};
\node (C) at (0, -1.5) {\small 12};
\node (D) at (1, -1.5) {\small 14};

\draw [->,green] (A) -- (B);
\draw [->,red] (A) -- (D);

\node (E) at (-2.5, -3) {\small 13}; 
\node (F) at (-1.5, -3) {\small 15};
\node (G) at (-.5, -3) {\small 17};
\node (H) at (.5, -3) {\small 19};
\node (I) at (1.5, -3) {\small 21};
\node (J) at (2.5, -3) {\small 23};

\draw [->,green] (B) -- (E);
\draw [->,green] (C) -- (F);
\draw [->,green] (D) -- (G);
\draw [->,red] (B) -- (H);
\draw [->,red] (C) -- (I);
\draw [->,red] (D) -- (J);

\node (K) at (-4.5, -4.5) {\small 16};
\node (L) at (-3.5, -4.5) {\small 18};
\node (M) at (-2.5, -4.5) {\small 20};
\node (N) at (-1.5, -4.5) {\small 22};
\node (O) at (-.5, -4.5) {\small 24};
\node (P) at (.5, -4.5) {\small 26};
\node (Q) at (1.5, -4.5) {\small 28};
\node (R) at (2.5, -4.5) {\small 30};
\node (S) at (3.5, -4.5) {\small 32};
\node (T) at (4.5, -4.5) {\small 34};

\draw [->,green] (E) -- (K);
\draw [->,green] (F) -- (L);
\draw [->,green] (G) -- (M);
\draw [->,green] (H) -- (N);
\draw [->,green] (I) -- (O);
\draw [->,green] (J) -- (P);
\draw [->,red] (E) -- (O);
\draw [->,red] (F) -- (P);
\draw [->,red] (G) -- (Q);
\draw [->,red] (H) -- (R);
\draw [->,red] (I) -- (S);
\draw [->,red] (J) -- (T);

\node (U) at (-7, -6) {\small 19};
\node (V) at (-6, -6) {\small 21};
\node (W) at (-5, -6) {\small 23};
\node (X) at (-4, -6) {\small 25};
\node (Y) at (-3, -6) {\small 27};
\node (Z) at (-2, -6) {\small 29};
\node (AA) at (-1, -6) {\small 31};
\node (AB) at (0, -6) {\small 33};
\node (AC) at (1, -6) {\small 35};
\node (AD) at (2, -6) {\small 37};
\node (AE) at (3, -6) {\small 39};
\node (AF) at (4, -6) {\small 41};
\node (AG) at (5, -6) {\small 43};
\node (AH) at (6, -6) {\small 45};
\node (AI) at (7, -6) {\small 47};

\draw [->,green] (K) -- (U);
\draw [->,green] (L) -- (V);
\draw [->,green] (M) -- (W);
\draw [->,green] (N) -- (X);
\draw [->,green] (O) -- (Y);
\draw [->,green] (P) -- (Z);
\draw [->,green] (Q) -- (AA);
\draw [->,green] (R) -- (AB);
\draw [->,green] (S) -- (AC);
\draw [->,green] (T) -- (AD);
\draw [->,red] (K) -- (Z);
\draw [->,red] (L) -- (AA);
\draw [->,red] (M) -- (AB);
\draw [->,red] (N) -- (AC);
\draw [->,red] (O) -- (AD);
\draw [->,red] (P) -- (AE);
\draw [->,red] (Q) -- (AF);
\draw [->,red] (R) -- (AG);
\draw [->,red] (S) -- (AH);
\draw [->,red] (T) -- (AI);
\end{tikzpicture}
\end{scaletikzpicturetowidth}
\caption{Numerators appearing in $\mc{CD}(n)$ for $n=3,4,5,6,7$}
\label{fig:numerators}
\end{figure}
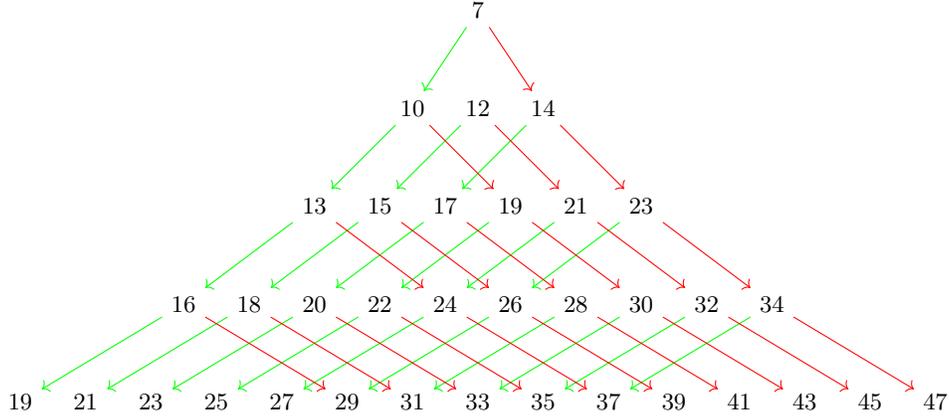
 
Starting with $n=3$, each level of Figure \ref{fig:numerators} shows the numerators appearing in $\mc{CD}(n)$. A green arrow indicates that the numerator below is achievable from the one above by an application of $-\oplus\top$, while a red arrow indicates the numerator below is achievable from the one above by an application of $-\sqcup\2$. Notice that some commuting degrees are achievable by both $-\oplus\top$ and $-\sqcup\2$. 
\end{example}

We prove below that every commuting degree in $\mc{CD}(k)$ is achieved by some algebra of order $k$. Looking at Figure \ref{fig:numerators}, the idea of the proof is this: at level $n$, there are $T_{n-2}$-many algebras, and we must construct $T_{n-1}$ algebras of order $n+1$ from them. Apply $-\oplus\top$ to all of the algebras on level $n$; this increases the numerators of their commuting degrees by 3. To construct the remaining $T_{n-1}-T_{n-2}=n-1$ algebras, we apply $-\sqcup\2$ to only the last $n-1$ algebras listed on level $n$, which increases the numerators by $2n+1$. Some tedious computation reveals that this will yield every commuting degree in the $n+1$ level. See also Figure \ref{fig:algebra path}.

\begin{theorem}\label{main result 1}
For every $n\geq 3$, every commuting degree in $\mc{CD}(n)$ is achieved by an algebra of order $n$.
\end{theorem}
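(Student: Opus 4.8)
The plan is to induct on $n$, using Lemmas \ref{union with 2} and \ref{cd of sum} to push a complete level of commuting degrees up to the next order. I would first record both operations purely in terms of numerators: writing $\cd(\A)=\frac{k}{n^2}$ for an order-$n$ algebra, Lemma \ref{cd of sum} sends $k\mapsto k+3$ and Lemma \ref{union with 2} sends $k\mapsto k+2n+1$, each raising the order to $n+1$. The numerators appearing in $\mc{CD}(n)$ form the arithmetic progression $3n-2,\,3n,\,\ldots,\,n^2-2$ of common difference $2$ and length $T_{n-2}$, so the entire problem reduces to showing that these two affine maps, applied to a complete progression at level $n$, cover the complete progression $3n+1,\,3n+3,\,\ldots,\,n^2+2n-1$ at level $n+1$.

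For the base cases I would check $n=3$ directly (the single value $\tfrac{7}{9}$ is realized by $\PI$) and $n=4$ from Table \ref{CDs for n=4}, which already exhibits algebras for the three numerators $10,12,14$. For the inductive step, assuming every numerator in $\mc{CD}(n)$ is realized at level $n$ with $n\geq 4$, I would apply $-\oplus\top$ to all $T_{n-2}$ of these algebras, producing exactly the numerators $3n+1,\,3n+3,\,\ldots,\,n^2+1$; and separately apply $-\sqcup\2$ to the algebras realizing the top $n-1$ numerators $n^2-2n+2,\,\ldots,\,n^2-2$, producing $n^2+3,\,n^2+5,\,\ldots,\,n^2+2n-1$. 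Since $n^2+1$ and $n^2+3$ differ by $2$, the two blocks splice into the single progression $3n+1,\,\ldots,\,n^2+2n-1$, which is precisely the numerator set of $\mc{CD}(n+1)$.

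Once the bookkeeping is set up, most of this is routine arithmetic, and the genuine content is verifying that the two blocks meet with the correct parity and no gap, and that enough source algebras exist to feed $-\sqcup\2$. The existence condition is the main obstacle: applying $-\sqcup\2$ to the top $n-1$ algebras requires $T_{n-2}\geq n-1$, i.e. $\tfrac{(n-2)(n-1)}{2}\geq n-1$, which holds exactly when $n\geq 4$; this is why $n=4$ must be taken as a base case rather than derived from $n=3$. (Alternatively, one can smooth over this by feeding $-\oplus\top$ the commutative algebra of order $n$, whose numerator is $n^2$, as an extra seed; this covers $n^2+3$ from above and lets the induction run uniformly from $n=3$.) I would also note that parity never causes trouble: each increment $+3$ and $+2n+1$ is odd, so it flips the numerator from the parity of $3n-2$ to that of $3(n+1)-2$, guaranteeing that the values produced land in the right residue class and that no even/odd mismatch can arise.
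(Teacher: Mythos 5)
Your proposal is correct and follows essentially the same route as the paper: induct on $n$, apply $-\oplus\top$ to every algebra at level $n$ to cover the numerators $3n+1$ through $n^2+1$, and apply $-\sqcup\2$ to the algebras realizing the top $n-1$ numerators to cover $n^2+3$ through $n^2+2n-1$. Your explicit identification of the constraint $T_{n-2}\geq n-1$ (hence the need for $n=4$ as a separate base case) is a slightly sharper accounting than the paper, which simply verifies $n=3,4,5$ by hand before running the same inductive step.
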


\begin{proof}
We proceed by induction on $n$, noting again that the result holds for $\mc{CD}(3)$, $\mc{CD}(4)$, and $\mc{CD}(5)$. Suppose the result is true for $n\geq 6$ and set $t:=T_{n-2}$. Then there are $t$-many BCK-algebras of order $n$, call them $\A_1, \A_2, \ldots, \A_t$, such that $\mc{CD}(n)=\{\,\cd(\A_1), \cd(\A_2), \ldots, \cd(\A_t)\,\}$. Assume the algebras are labelled in order of increasing commuting degree. That is, assume
\[\cd(\A_1)=\frac{n^2-2t}{n^2}=\frac{3n-2}{n^2}\,,\,\cd(\A_2)=\frac{3n}{n^2}\,,\,\ldots\,,\, \cd(\A_t)=\frac{n^2-2}{n^2}\,.\]

Define a family of algebras as follows:
\begin{align*}
\B_1&=\A_1\oplus\top\\
\B_2&=\A_2\oplus\top\\
&\vdots\\
\B_t&=\A_t\oplus\top\\
\B_{t+1}&=\A_{t-(n-2)}\sqcup\2\\
\B_{t+2}&=\A_{t-(n-3)}\sqcup\2\\
&\vdots\\
\B_{t+(n-2)}&=\A_{t-1}\sqcup\2\\
\B_{t+(n-1)}&=\A_{t}\sqcup\2\,.
\end{align*} Notice that $t+(n-1)=T_{n-2}+(n-1)=T_{n-1}$, so we have $T_{n-1}$-many algebras all with order $n+1$. Applying Lemmas \ref{union with 2} and \ref{cd of sum} where appropriate, we compute the commuting degrees of the $\B_k$'s as
\begingroup
\allowdisplaybreaks
\begin{align*}
&\cd(\B_1)=\cd(\A_1\oplus\top)=\frac{(3n-2)+3}{(n+1)^2}=\frac{3(n+1)-2}{(n+1)^2}=\frac{(n+1)^2-2T_{n-1}}{(n+1)^2}\\\\
&\cd(\B_2)=\cd(\A_2\oplus\top)=\frac{3n+3}{(n+1)^2}=\frac{3(n+1)}{(n+1)^2}=\frac{(n+1)^2-2(T_{n-1}-1)}{(n+1)^2}\\\\
&\hspace{1.25cm}\vdots\\\\
&\cd(\B_t)=\cd(\A_t\oplus\top)=\frac{(n^2-2)+3}{(n+1)^2}=\frac{n^2+1}{(n+1)^2}=\frac{(n+1)^2-2\bigl(T_{n-1}-(t-1)\bigr)}{(n+1)^2}\\\\
&\begin{aligned}
\cd(\B_{t+1})=\cd(\A_{t-(n-2)}\sqcup\2)=\frac{\bigl(n^2-2(n-1)\bigr)+(2n+1)}{(n+1)^2}&=\frac{n^2+3}{(n+1)^2}\\\\
&=\frac{(n+1)^2-2\bigl(T_{n-1}-t\bigr)}{(n+1)^2}\end{aligned}\\\\
&\begin{aligned}
\cd(\B_{t+2})=\cd(\A_{t-(n-3)}\sqcup\2)=\frac{\bigl(n^2-2(n-2)\bigr)+(2n+1)}{(n+1)^2}&=\frac{n^2+5}{(n+1)^2}\\\\
&=\frac{(n+1)^2-2(T_{n-1}-(t+1)\bigr)}{(n+1)^2}\end{aligned}\\\\
&\hspace{1.25cm}\vdots\\\\
&\cd(\B_{t+(n-2)})=\cd(\A_{t-1}\sqcup\2)=\frac{(n^2-4)+(2n+1)}{(n+1)^2}=\frac{(n+1)^2-4}{(n+1)^2}\\\\
&\cd(\B_{t+(n-1)})=\cd(\A_{t}\sqcup\2)=\frac{(n^2-2)+(2n+1)}{(n+1)^2}=\frac{(n+1)^2-2}{(n+1)^2}\,.
\end{align*}
\endgroup
From this we see that every value in $\mc{CD}(n+1)$ is obtained. By induction, the theorem follows.

\end{proof}

In Figure \ref{fig:algebra path} we illustrate the construction in the proof going from $n=5$ to $n=6$ to $n=7$. As before, a green edge is an application of $-\oplus\top$ while a red edge is an application of $-\sqcup\2$. 

\begin{figure}[h]
\centering
\begin{scaletikzpicturetowidth}{\textwidth}
\begin{tikzpicture}[scale=\tikzscale]
\node (E) at (-2.5, -3) {\small $\A_1$}; 
\node (F) at (-1.5, -3) {\small $\A_2$};
\node (G) at (-.5, -3) {\small $\A_3$};
\node (H) at (.5, -3) {\small $\A_4$};
\node (I) at (1.5, -3) {\small $\A_5$};
\node (J) at (2.5, -3) {\small $\A_6$};

\node (K) at (-4.5, -4.5) {\small $\B_1$};
\node (L) at (-3.5, -4.5) {\small $\B_2$};
\node (M) at (-2.5, -4.5) {\small $\B_3$};
\node (N) at (-1.5, -4.5) {\small $\B_4$};
\node (O) at (-.5, -4.5) {\small $\B_5$};
\node (P) at (.5, -4.5) {\small $\B_6$};
\node (Q) at (1.5, -4.5) {\small $\B_7$};
\node (R) at (2.5, -4.5) {\small $\B_8$};
\node (S) at (3.5, -4.5) {\small $\B_9$};
\node (T) at (4.5, -4.5) {\small $\B_{10}$};

\draw [->,green] (E) -- (K);
\draw [->,green] (F) -- (L);
\draw [->,green] (G) -- (M);
\draw [->,green] (H) -- (N);
\draw [->,green] (I) -- (O);
\draw [->,green] (J) -- (P);
\draw [->,red] (G) -- (Q);
\draw [->,red] (H) -- (R);
\draw [->,red] (I) -- (S);
\draw [->,red] (J) -- (T);

\node (U) at (-7, -6) {\small $\C_1$};
\node (V) at (-6, -6) {\small $\C_2$};
\node (W) at (-5, -6) {\small $\C_3$};
\node (X) at (-4, -6) {\small $\C_4$};
\node (Y) at (-3, -6) {\small $\C_{5}$};
\node (Z) at (-2, -6) {\small $\C_{6}$};
\node (AA) at (-1, -6) {\small $\C_7$};
\node (AB) at (0, -6) {\small $\C_8$};
\node (AC) at (1, -6) {\small $\C_9$};
\node (AD) at (2, -6) {\small $\C_{10}$};
\node (AE) at (3, -6) {\small $\C_{11}$};
\node (AF) at (4, -6) {\small $\C_{12}$};
\node (AG) at (5, -6) {\small $\C_{13}$};
\node (AH) at (6, -6) {\small $\C_{14}$};
\node (AI) at (7, -6) {\small $\C_{15}$};

\draw [->,green] (K) -- (U);
\draw [->,green] (L) -- (V);
\draw [->,green] (M) -- (W);
\draw [->,green] (N) -- (X);
\draw [->,green] (O) -- (Y);
\draw [->,green] (P) -- (Z);
\draw [->,green] (Q) -- (AA);
\draw [->,green] (R) -- (AB);
\draw [->,green] (S) -- (AC);
\draw [->,green] (T) -- (AD);
\draw [->,red] (P) -- (AE);
\draw [->,red] (Q) -- (AF);
\draw [->,red] (R) -- (AG);
\draw [->,red] (S) -- (AH);
\draw [->,red] (T) -- (AI);

\end{tikzpicture}
\end{scaletikzpicturetowidth}
\caption{Obtaining $\mc{CD}(6)$ from $\mc{CD}(5)$, and $\mc{CD}(7)$ from $\mc{CD}(6)$.}
\label{fig:algebra path}
\end{figure}
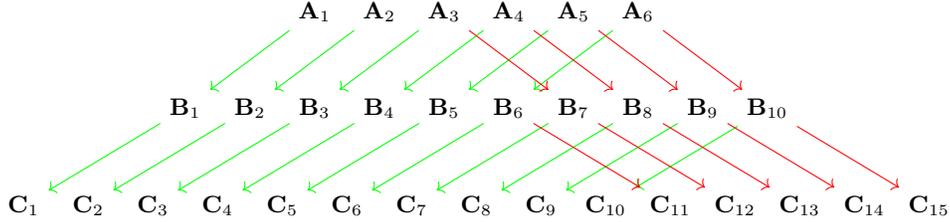

The procedure above is just one of very many ways to obtain commuting degrees. For example, the procedure above generates only 10 BCK-algebras of order 6, but in total there are 747 non-isomorphic non-commutative BCK-algebras of order 6. 

Finally, we show that every rational in $(0,1)$ is the commuting degree of some finite non-commutative BCK-algebra, and give an example of the procedure after the theorem.

\begin{theorem}\label{main result 2} 
Every rational in $(0,1)$ is the commuting degree for some finite non-commutative BCK-algebra. That is,
\[\bigcup\limits_{n=3}^\infty \mc{CD}(n)=\mathbb{Q}\cap (0,1)\,.\]
\end{theorem}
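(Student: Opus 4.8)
The plan is to prove the stated equality by establishing the two inclusions separately. One direction is immediate: every element of $\mc{CD}(n)$ has the form $\frac{k}{n^2}$ with $3n-2\leq k\leq n^2-2$, so it is rational and lies strictly between $0$ and $1$; hence $\bigcup_{n\geq 3}\mc{CD}(n)\subseteq\mathbb{Q}\cap(0,1)$. All the content is in the reverse inclusion, for which I must show that an arbitrary rational in $(0,1)$ actually appears as some $\frac{k}{n^2}\in\mc{CD}(n)$.

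For the reverse inclusion, fix a rational $\frac{p}{q}\in(0,1)$ written with integers $1\leq p<q$, so in particular $q\geq 2$. The idea is to clear the denominator by forcing $n$ to be a multiple of $q$: I would set $n=qm$ for a positive integer $m$ to be chosen, so that $\frac{p}{q}=\frac{pqm^2}{q^2m^2}=\frac{pqm^2}{n^2}$. Writing $k:=pqm^2$, it then suffices to check that $\frac{k}{n^2}$ genuinely lies in $\mc{CD}(n)$, i.e. that $k$ is one of the admissible numerators $3n-2,\,3n,\,\ldots,\,n^2-2$. From the description of $\mc{CD}(n)$ (whose numerators form an arithmetic progression of step $2$ starting at $3n-2$, all congruent to $n$ modulo $2$), this reduces to two conditions: (i) the range bound $3n-2\leq k\leq n^2-2$, and (ii) the parity condition $k\equiv n\pmod 2$.

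Both conditions are arranged by a suitable choice of the free parameter $m$. For the upper bound, $n^2-k=qm^2(q-p)\geq 2$ holds for every $m\geq 1$ since $q\geq 2$ and $q-p\geq 1$. For the lower bound, $k-(3n-2)=pqm^2-3qm+2$ is nonnegative as soon as $m\geq 3$, because then $pqm^2\geq qm^2\geq 3qm$. Finally, taking $m$ to be even makes both $n=qm$ and $k=pqm^2$ even, so (ii) holds. Thus any even $m\geq 4$ suffices; fixing such an $m$ yields $n=qm\geq 3$ together with $\frac{p}{q}=\frac{k}{n^2}\in\mc{CD}(n)$, and Theorem \ref{main result 1} then provides a non-commutative BCK-algebra of order $n$ whose commuting degree is exactly $\frac{p}{q}$.

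I do not expect a genuine obstacle here, and that is really the point: once Theorem \ref{main result 1} is available, the only thing to verify is that the range and parity side-conditions can be met simultaneously, and the denominator-clearing substitution $n=qm$ leaves enough freedom in $m$ to do so (enlarge $m$ for the range, make $m$ even for the parity). The mildly delicate step worth stating carefully is the parity bookkeeping, since an odd choice of $m$ with $p$ odd and $q$ even could in principle produce a numerator $k$ of the wrong parity and hence land strictly between two admissible values of $\mc{CD}(n)$; restricting to even $m$ removes this issue cleanly.
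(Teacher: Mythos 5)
Your proposal is correct, and it follows the same basic strategy as the paper: clear the denominator by taking $n$ to be a multiple of $q$, then check that the resulting numerator lands in the admissible arithmetic progression $\{3n-2, 3n, \ldots, n^2-2\}$ via a range check and a parity check, and finally invoke Theorem \ref{main result 1} to produce an actual algebra. The difference is in the choice of multiple: the paper fixes $n=2q$ and $k=2q(q-p)$ (in the ``offset'' parametrization $\frac{n^2-2k}{n^2}$), whereas you keep $n=qm$ with $m$ a free parameter and then take $m$ even and at least $4$. Your extra freedom is not merely cosmetic: the paper's verification that $k\leq T_{n-2}$ miscomputes $T_{2q-2}$ as $2q(q-1)$ rather than $(q-1)(2q-1)$, and the inequality $2q(q-p)\leq (q-1)(2q-1)$ actually fails when $p=1$. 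Concretely, for $\tfrac{1}{2}$ the paper's recipe gives $n=4$ and numerator $8$, but $\mc{CD}(4)$ has minimum numerator $10$, so $\tfrac{8}{16}\notin\mc{CD}(4)$; one must go to $n=6$ (or, as in your construction, $n=8$) to realize $\tfrac12$. Your condition $m\geq 3$, which guarantees $pqm^2\geq 3qm$ and hence the lower bound $k\geq 3n-2$ even for $p=1$, is exactly what repairs this. So your argument is both correct and, on this point, more careful than the paper's.
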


\begin{proof}
Take $\frac{p}{q}\in\mathbb{Q}\cap(0,1)$. Note that $1\leq p<q$. Set $n=2q$ and $k=2q(q-p)$. Notice that $k\geq 1$ by definition. On the other hand, 
\[T_{2q-2}=\frac{(2q-2)\bigl(2(q+1)-2\bigr)}{2}=2q(q-1)\geq 2q(q-p)=k\,.\] Thus, $k\in\{1,2,\ldots, T_{n-2}\}$. Finally, we observe that 
\[\frac{n^2-2k}{n^2}
=\frac{(2q)^2-2\bigl(2q(q-p)\bigr)}{(2q)^2}=\frac{4q^2-4q^2+4pq}{4q^2}=\frac{p}{q}\]
and since $k\in\{1,\ldots, T_{n-2}\}$, we have $\frac{p}{q}\in \mc{CD}(n)$. 
\end{proof}

Notice in fact that this shows the slightly stronger result that $\bigcup\limits_{\ell\geq 2}\mc{CD}(2\ell)=\mathbb{Q}\cap(0,1)$. We close on an example.

\begin{example}
We find some BCK-algebras with commuting degree $\frac{2}{5}$. By the proof of Theorem \ref{main result 2}, we should consider algebras of order $n=2q=10$, and the appropriate value of $k$ is 30. The procedure outlined in the proof of Theorem \ref{main result 1} gives us three algebras with commuting degree $\frac{2}{5}$:
\begin{align*}
((((((&\PI\oplus\top)\oplus\top)\oplus\top)\oplus\top)\sqcup\2)\oplus\top)\oplus\top\\
((((((&\TC\oplus\top)\oplus\top)\oplus\top)\sqcup\2)\oplus\top)\oplus\top)\oplus\top\\
((((((&\PI\sqcup\2)\oplus\top)\sqcup\2)\oplus\top)\oplus\top)\oplus\top)\oplus\top
\end{align*}
Figure \ref{fig:paths for 2/5} below shows the paths through Figure \ref{fig:numerators}, starting from either $\PI$ or $\TC\oplus\top$, to obtain these algebras.
\begin{figure}[h]
\centering
\begin{tikzpicture}
\node (A) at (0,0) {\small 7/9 $(\PI)$};
\node (B) at (-2.5, -1) {\small 10/16 };
\node (C) at (0, -1) {\small 12/16 $(\TC\oplus\top)$};
\node (D) at (2.5, -1) {\small 14/16};

\draw [->,green] (A) -- (B);
\draw [->,red] (A) -- (D);

\node (E) at (-2.5, -2) {\small 13/25}; 
\node (F) at (-0, -2) {\small 15/25};
\node (G) at (2.5, -2) {\small 17/25};

\draw [->,green] (B) -- (E);
\draw [->,green] (C) -- (F);
\draw [->,green] (D) -- (G);

\node (H) at (-2.5, -3) {\small 16/36};
\node (I) at (-0, -3) {\small 18/36};
\node (J) at (2.5, -3) {\small 28/36};

\draw [->,green] (E) -- (H);
\draw [->,green] (F) -- (I);
\draw [->,red] (G) -- (J);

\node (K) at (-2.5,-4) {\small 19/49};
\node (L) at (1.25, -4) {\small 31/49};

\draw [->, green] (H) -- (K);
\draw [->, red] (I) -- (L);
\draw [->, green] (J) -- (L);

\node (M) at (0, -5) {\small 34/64};
\node (N) at (0, -6) {\small 37/81};
\node (O) at (0, -7) {\small 40/100};

\draw [->,red] (K) -- (M);
\draw [->,green] (L) -- (M);
\draw [->,green] (M) -- (N);
\draw [->,green] (N) -- (O);
\end{tikzpicture}
\caption{Paths to obtain commuting degree $2/5$}
\label{fig:paths for 2/5}
\end{figure}
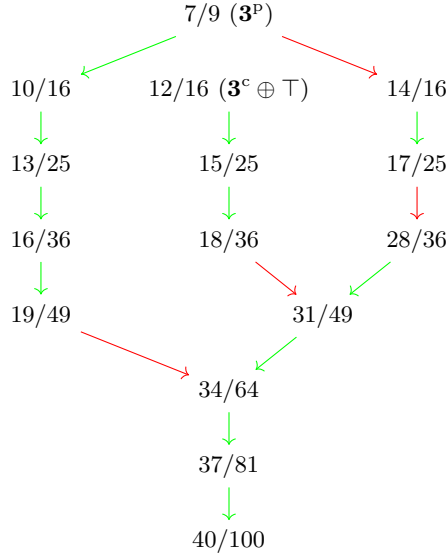

\end{example}

\bibliographystyle{plain}
\bibliography{comm_deg_bib}

\end{document}